\documentclass[12pt]{article}
\usepackage{fullpage}
\usepackage{epsf,epsfig,amsfonts,amsgen,amsmath,amstext,amsbsy,amsopn,amsthm,amssymb, epstopdf,tikz,mathrsfs}
\usepackage{ebezier,eepic,mathtools,dsfont}
\usepackage{color,colordvi}
\usepackage{multirow}
\usepackage{url}
\usepackage{graphicx}
\usepackage{epsf}
\usepackage[format=hang, margin=10pt]{caption}
\newtheorem{theorem}{Theorem}[section]

\newtheorem{conjecture}[theorem]{Conjecture}
\newtheorem{lemma}[theorem]{Lemma}

\theoremstyle{definition}

\begin{document}

\title{On a conjecture concerning the property of chromatic polynomials with negative variables}

\author{Yan Yang\thanks{School of Mathematics and KL-AAGDM, Tianjin University, Tianjin, China: yanyang@tju.edu.cn.}\,
}

\date{}

\maketitle

\begin{abstract}
  Let \( G \) be a graph of order \( n \) and $P(G,x)$ be the chromatic polynomial of $G$.
Dong, Ge, Gong, Ning, Ouyang, and Tay (\emph{J. Graph Theory} {\bf96} (2021) 343) conjectured that
\(
\frac{d^k}{dx^k} \bigl( \ln[(-1)^n P(G, x)] \bigr) < 0
\)
holds for all \( k \geq 2 \) and \( x \in (-\infty, 0) \). We prove this conjecture for all \( k \geq 2 \) and \( x\leq -6.66\Delta k \), in which $\Delta$ is
the maximum degree of $G$.

\medskip
\noindent {\bf Keywords:} chromatic polynomial; higher derivative; root.

\smallskip
\noindent {\bf Mathematics Subject Classification (2020):} 05C31.
\end{abstract}

\section{Introduction}
All graphs considered in this paper are simple and connected. For any graph $G=(V, E)$ and any positive integer $x$,  a {\it proper $x$-coloring} of $G$ is a mapping $c: V\rightarrow \{1,\ldots,x\}$ such that
$c(u)\neq c(v)$ whenever $uv\in E$. The {\it chromatic number} $\chi(G)$ is the least $x$ such that $G$ has a proper $x$-coloring.
In 1912, Birkhoff \cite{B1912}
introduced the {\it chromatic polynomial} $P(G,x)$ of $G$  to count the number of distinct proper $x$-colorings of $G$, i.e.,
$$P(G,x)=|\{c: V\rightarrow \{1,\ldots,x\} \mid c\text{ is proper}\}|.$$
It was hoped that the chromatic polynomial could be used to solve the four color conjecture of that time, because there is a clear relationship between chromatic polynomials and chromatic numbers, i.e., $\chi(G)$ is the smallest positive integer such that $P(G, \chi(G))\neq 0$.
Although this approach for the four color problem has not been successful so far, chromatic polynomials attracted widespread attention and have become an active topic nowadays. For a book devoted to the topic, see Dong, Koh and Teo~\cite{DKT05}.

By the principle of inclusion and exclusion, we have
\begin{equation*}
P(G,x) = \sum_{E' \subset E} (-1)^{|E'|}x^{\kappa(E')},
\end{equation*}
where \( \kappa(E') \) denotes the number of connected components in \( E' \).
It is clear that $P(G, x)$ is a polynomial in $x$. In 1932, Whitney \cite{W1932} interpreted the coefficients of \( P(G, x) \)
by introducing the notion of broken cycles. Let \( \eta: E \to \{1, 2, \dots, |E|\} \) be a bijection, for any cycle \( C \) in \( G \), the path \( C - e \) is called a {\it broken cycle} of \( G \) if \( e \) is the edge on \( C \) with \( \eta(e) \leq \eta(e') \) for every edge \( e' \) on \( C \).
\begin{theorem}[\cite{W1932}]\label{a}
Let \( G \) be a graph of order \( n \) and \( \eta: E\to \{1, 2, \dots, |E|\} \) be a bijection. Then,
\[
P(G, x) = \sum_{i=1}^{n} (-1)^{n-i} a_i(G) x^i,
\]
where \( a_i(G) \) is the number of spanning subgraphs of \( G \) with \( n - i \) edges which do not contain broken cycles.
\end{theorem}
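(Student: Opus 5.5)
The plan is the classical proof of Whitney's broken cycle theorem: start from the inclusion--exclusion expansion $P(G,x)=\sum_{E'\subseteq E}(-1)^{|E'|}x^{\kappa(E')}$, where $\kappa(E')$ is the number of components of the spanning subgraph $(V,E')$. Then cancel, via a sign-reversing involution, all terms with $E'$ containing a broken cycle. The surviving sets $E'$ contain no broken cycle, and in particular no cycle, since any cycle contains the broken cycle obtained by deleting its minimal edge. So each survivor is a spanning forest with $|E'|=n-\kappa(E')$. Writing $i=\kappa(E')$, such a term contributes $(-1)^{n-i}x^i$. Collecting terms gives exactly $\sum_i(-1)^{n-i}a_i(G)x^i$, and $i$ ranges over $1,\dots,n$ because a forest on $n$ vertices has between $1$ and $n$ components.

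The involution is built as follows. Let $\mathcal{B}$ be the family of edge sets containing at least one broken cycle. For $E'\in\mathcal{B}$, consider all broken cycles $C-e\subseteq E'$, where $e$ is the $\eta$-minimal edge of $C$. Among the associated edges $e$, choose the one with the smallest label, and define $\phi(E')=E'\triangle\{e\}$.

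Toggling $e$ does not change $\kappa$. The endpoints of $e$ are joined by the path $C-e\subseteq E'\setminus\{e\}$, so adding or removing $e$ merges no components. Meanwhile $|E'|$ changes parity. Hence the terms for $E'$ and $\phi(E')$ cancel, and $\sum_{E'\in\mathcal{B}}(-1)^{|E'|}x^{\kappa(E')}=0$, provided $\phi$ is an involution on $\mathcal{B}$.

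The main obstacle is checking that $\phi$ is well defined as an involution, i.e.\ that $\phi(E')\in\mathcal{B}$ and that the minimal associated edge of $\phi(E')$ is again $e$. The first claim holds because $C-e\subseteq\phi(E')$ (as $e\notin C-e$). For the second, the set of broken cycles of $E'$ and of $\phi(E')$ could differ only by broken cycles that use the edge $e$ itself. So I must show that no broken cycle $C'-e'$ containing $e$, present in $E'\cup\{e\}$, has $\eta(e')<\eta(e)$.

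Suppose such a broken cycle existed. Then $e'$ is the minimal edge of $C'$, and $e\in C'$, so $\eta(e')<\eta(e)$. Replace $e$ in $C'$ by the path $C-e$; every edge of $C$ has label at least $\eta(e)>\eta(e')$. The union $(C'\setminus\{e\})\cup(C\setminus\{e\})$ contains a closed walk through $e'$, hence a cycle $C''$ through $e'$. The remaining edges of $C''$ lie in $E'\setminus\{e\}$ and all have labels exceeding $\eta(e')$. So $e'$ is the minimal edge of $C''$, and $C''-e'\subseteq E'$ is a broken cycle whose associated edge has label smaller than $\eta(e)$. This contradicts the choice of $e$.

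The delicate point is that the symmetric difference of two cycles sharing $e$ contains a cycle through $e'$. I would argue it via cycle space parity: $C\triangle C'$ is an even subgraph containing $e'$, and every even subgraph decomposes into edge-disjoint cycles, one of which contains $e'$. With $\phi$ an involution, the cancellation is complete and the theorem follows.
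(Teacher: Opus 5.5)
Your proof is correct. The involution that toggles the least edge completing a broken cycle in $E'$ preserves $\kappa$ and flips the parity of $|E'|$. Your check that adding $e$ creates no completing edge $e'$ with $\eta(e')<\eta(e)$ is sound: $e'\notin C$ because $\eta(e')<\eta(e)=\min_{C}\eta$, so $e'$ survives in the even subgraph $C\triangle C'$, whose other edges lie in $E'\setminus\{e\}$ and have labels larger than $\eta(e')$. The paper gives no proof of its own; it cites Whitney right after displaying the inclusion--exclusion expansion, and your argument is the standard cancellation proof from that expansion.
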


Let $x$ be an arbitrary complex number in  $P(G,x)$, and $\Delta(G)$ (or simply $\Delta$) be the maximum degree of a graph $G$.
The distribution of the roots of $P(G,x)=0$ and  evaluations of $P(G,x)$ at some specific points are two of the many research problems related to chromatic polynomials. We now introduce the results concerning the bound on the roots of $P(G,x)=0$ that are needed in this note.

\begin{theorem}[\cite{S01}]\label{1}
 There exists a constant $K$ such that for every graph $G$, $P(G,x) \neq 0$ for all $x \in \mathbb{C}$ with $|x| > K \Delta$.
\end{theorem}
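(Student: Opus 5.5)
The plan is to prove Theorem~\ref{1} by the polymer-gas (cluster expansion) method. The first step is to start from the subgraph expansion
\[
P(G,x)=\sum_{E'\subseteq E}(-1)^{|E'|}x^{\kappa(E')}.
\]
Group each $E'$ according to the partition of $V$ into the vertex sets of its components, where isolated vertices count as singleton blocks. This rewrites $P(G,x)=x^n Z(G,x)$ with
\[
Z(G,x)=\sum_{\{S_1,\dots,S_m\}}\prod_{j=1}^m w(S_j),\qquad w(S)=x^{1-|S|}\sum_{\substack{F\subseteq E(G[S])\\ (S,F)\ \text{connected}}}(-1)^{|F|}.
\]
The outer sum runs over families of pairwise disjoint vertex sets with $|S_j|\ge 2$ and $G[S_j]$ connected. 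This is a hard-core polymer gas: polymers are connected vertex sets, and two polymers are incompatible exactly when they intersect. For $x\neq 0$, nonvanishing of $P$ is equivalent to $Z\neq 0$.

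The second step is to bound the polymer weights. By the Penrose tree-graph identity, the alternating sum over connected spanning subgraphs of $G[S]$ is bounded in absolute value by the number $\tau(G[S])$ of spanning trees of $G[S]$. Hence
\[
|w(S)|\le |x|^{1-|S|}\tau(G[S]).
\]
This step is where I expect the main work to lie. I would follow Penrose's partition scheme: fix a vertex ordering, and map each connected spanning subgraph to a canonical spanning tree (for instance its BFS tree). This partitions the connected subgraphs into Boolean intervals $[T,T\cup R(T)]$. Every interval with $R(T)\neq\emptyset$ contributes zero to the alternating sum, and each surviving interval contributes $\pm1$ and is indexed by a tree.

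The third step is to verify the Kotecký--Preiss criterion. I need some $a>0$ such that, for every vertex $v$,
\[
\sum_{S\ni v}|w(S)|\,e^{a|S|}\le a .
\]
Summing $\tau(G[S])$ over the sets $S\ni v$ of size $s$ counts the subtrees of $G$ with $s$ vertices that contain $v$. The number of such subtrees is at most $(e\Delta)^{s-1}$, by the standard count of rooted subtrees in a graph of maximum degree $\Delta$. The left-hand side is therefore at most
\[
e^{a}\sum_{s\ge 2}\Bigl(\frac{e^{1+a}\Delta}{|x|}\Bigr)^{s-1},
\]
a geometric series. Taking, say, $a=1$ and $|x|>K\Delta$ with $K$ a suitable absolute constant (roughly $K=2e^{3}$ would do), this is at most $1=a$.

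Once the criterion holds, the Kotecký--Preiss theorem gives absolute convergence of the cluster expansion for $\log Z(G,x)$, uniformly in $x$ on the region $|x|>K\Delta$. In particular $Z(G,x)\neq0$ there, and hence $P(G,x)\neq 0$ for all $|x|>K\Delta$.
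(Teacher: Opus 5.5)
Your proof is correct. The paper does not prove this statement: Theorem~\ref{1} is quoted from Sokal~\cite{S01} and used only as a black box, together with later numerical improvements of $K$. Your argument is essentially Sokal's original one. You recast the subgraph expansion as a hard-core gas of connected vertex sets, bound the polymer weights by trees, and check a Dobrushin/Koteck\'y--Preiss criterion with $a(S)=a|S|$. The only difference is that your counting is cruder: the bound $(e\Delta)^{s-1}$ with $a=1$ gives $K=2e^3\approx 40$, against Sokal's $7.97$. For the existential statement this does not matter.

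Two remarks.

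First, the step you flag as the main work follows at once from Whitney's theorem as quoted in the paper (Theorem~\ref{a}), so you do not need the Penrose partition scheme. The alternating sum over connected spanning subgraphs of $G[S]$ is exactly the coefficient of $x$ in $P(G[S],x)$. Hence it equals $\pm a_1(G[S])$, the number of spanning subgraphs of $G[S]$ with $|S|-1$ edges and no broken cycle. Such a subgraph contains no cycle, so it is a spanning tree, and therefore $|a_1(G[S])|\le\tau(G[S])$. This broken-circuit viewpoint is also the starting point of the Jenssen--Patel--Regts improvement.

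Second, every quantitative statement in the paper relies on the sharp bound $K\le 4.25$ of Bencs--Regts. This includes the thresholds following Lemma~\ref{le}, the constant $4.25$ in \eqref{e3}--\eqref{e4}, and the bound $-6.66\Delta k$ in Theorem~\ref{main}. Your level of estimation does not come close to that value.
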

Sokal \cite{S01} showed the constant $K\leq 7.97$, Fern\'{a}ndez and Procacci \cite{FP08} improved to $K\leq 6.91$,
Jenssen, Patel, Regts \cite{J24} further improved to $K \leq 5.94$. Very recently, Bencs and Regts \cite{BR26} established a better bound $K \leq 4.25$.


Denote \( P^{(i)}(G, x) \) the \( i \)-th derivative of \( P(G, x) \). Recently, Bernardi and Nadeau \cite{BN20} gave a nice combinatorial interpretation of \( P^{(i)}(G, -j) \) for any nonnegative integers \( i \) and \( j \) in terms of acyclic orientations. When \( i = 0 \) and \( j = 0 \),  their result recovers the classical interpretations due to Stanley \cite{S1973} and to Greene and Zaslavsky \cite{GZ83} respectively. In this note, we study the property of $(\ln[(-1)^n P(G, x)])^{(k)}$ for $k\geq 1$ and $x<0$.

In 2020, Dong, Ge, Gong, Ning, Ouyang, and Tay \cite{D21} studied a parameter called the mean size of a broken-cycle-free spanning subgraph of a graph $G$ of order $n$, that is
$\epsilon(G)=\sum\limits_{i=1}^{n} (n-i) a_i(G)/\sum\limits_{i=1}^{n} a_i(G)$, where \( a_i(G) \) is the number as defined in Theorem \ref{a}. They proved that $\epsilon(T_n)<\epsilon(G)<\epsilon(K_n)$ holds for any graph $G$ of order $n$ which is neither the
complete graph $K_n$ nor a tree $T_n$ of order $n$. In their proof, they introduced a function $\epsilon(G,x)=P'(G,x)/P(G,x)$. One can deduce that $\epsilon(G)=n+\epsilon(G,-1)$ holds for any graph $G$ of order $n$. Thus, for any graphs $G$ and $H$ of the same order, $\epsilon(G)<\epsilon(H)$ is equivalent to $\epsilon(G,-1)<\epsilon(H,-1)$. In \cite{D21}, the authors proved that for any graph $G$ of order $n$ which is neither the complete graph nor a tree, $\epsilon(T_n,x)<\epsilon(G,x)<\epsilon(K_n,x)$ holds for all $x<0$. Then $\epsilon(T_n)<\epsilon(G)<\epsilon(K_n)$ follows.

It is clear that for any graph \( G \) of order \( n \),
\begin{equation*}
\epsilon(G,x)=\frac{P'(G, x)}{P(G, x)}=\frac{d}{dx} (\ln[(-1)^n P(G, x)])< 0
\end{equation*}
holds for all \( x < 0 \). Because for $x<0$, \((-1)^n P(G, x)>0\) and \((-1)^n P'(G, x)<0\). In \cite{D21}, the authors conjectured that this property holds for higher derivatives of the function \(\ln[(-1)^n P(G, x)]\) for $x<0$.

\begin{conjecture}[\cite{D21}]\label{con} Let \( G \) be a graph of order \( n \). Then
\[
\frac{d^k}{dx^k} (\ln[(-1)^n P(G, x)]) < 0
\]
holds for all \( k \geq 2 \) and \( x \in (-\infty, 0) \).
\end{conjecture}


We focus on Conjecture \ref{con} in this note, and prove the conjecture for all $k\geq 2$ and \( x\leq -6.66\Delta k \).  For $k=2$ or $3$, we have the following results.

\begin{lemma}\label{le}
Let \( G \) be a graph of order \( n \) and $K$ be the constant in Theorem \ref{1}. Then
\[
\frac{d^2}{dx^2} (\ln[(-1)^n P(G, x)]) < 0
\]
holds for \( x <-2K\Delta\), and  \[
\frac{d^3}{dx^3} (\ln[(-1)^n P(G, x)]) < 0
\]
holds for \( x <-(\sqrt{3}+1)K\Delta\).
\end{lemma}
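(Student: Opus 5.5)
Write $P(G,x)=\prod_{j=1}^{n}(x-z_j)$ over the complex roots $z_1,\dots,z_n$, counted with multiplicity. For real $x<0$, $(-1)^nP(G,x)=\prod_j (z_j-x)>0$. Differentiating the logarithm gives
\[
\frac{d^k}{dx^k}\ln[(-1)^nP(G,x)]=-(k-1)!\sum_{j=1}^{n}\frac{1}{(z_j-x)^k}.
\]
It therefore suffices to show that $\mathrm{Re}\sum_j (z_j-x)^{-k}>0$; the sum is real anyway, since nonreal roots occur in conjugate pairs. The only input needed is Theorem \ref{1}: every root satisfies $|z_j|\le K\Delta$.

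Fix $x<0$, write $t=-x>0$ and $w_j=z_j-x=z_j+t$. Since $|z_j|\le K\Delta$, each $w_j$ lies in the disc of radius $R=K\Delta$ centred at $t$, which sits in the right half-plane once $t>R$. Write $w_j=r_je^{i\theta_j}$ with $\sin|\theta_j|\le R/t$. Then $\mathrm{Re}\,w_j^{-k}=r_j^{-k}\cos(k\theta_j)$. So each term is strictly positive provided $|k\theta_j|<\pi/2$ for all $j$, and this holds whenever $\arcsin(R/t)<\pi/(2k)$, that is, $t>R/\sin(\pi/(2k))$. Summing positive terms gives the claim.

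It remains to specialise to the two cases:
\begin{itemize}
\item For $k=2$: $\sin(\pi/4)=1/\sqrt2$, so the condition is $t>\sqrt2\,K\Delta$. This is weaker than the stated hypothesis $t>2K\Delta$, so the lemma follows.
\item For $k=3$: $\sin(\pi/6)=1/2$, so the condition is $t>2K\Delta$, which is implied by $t>(\sqrt3+1)K\Delta$.
\end{itemize}

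The authors' constants $2$ and $\sqrt3+1$ suggest a cruder route: bound $\mathrm{Re}\,w^{-k}$ by expanding $\mathrm{Re}\,(a+ib)^{k}$ with $|b|\le R$ and $a\ge t-R$. For $k=2$ this requires $a^2-b^2>0$. For $k=3$ it requires $a(a^2-3b^2)>0$, i.e.\ $a>\sqrt3 R$, which gives exactly $t>(\sqrt3+1)K\Delta$. Either route works, and the argument route yields the sharper condition.

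The main subtlety is the multiplicity and reality bookkeeping. The sum $\sum_j w_j^{-k}$ is real, so showing that every term has positive real part gives strict positivity of the whole sum, since $n\ge1$. Multiplying by $-(k-1)!$ then yields the strict inequality $<0$ in both statements of Lemma \ref{le}.
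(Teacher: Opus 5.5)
Your proof is correct, and it takes a genuinely different route from the paper's. The paper splits the roots into real roots $x_t$ and conjugate pairs $a_t\pm ib_t$, and writes the second and third log-derivatives as explicit real rational functions. It then reads off sufficient sign conditions: $x<-(|a_t|+|b_t|)$, resp.\ $x<-(|a_t|+\sqrt3|b_t|)$, together with the absence of negative real roots for the $x_t$ terms. Finally it bounds $|a_t|$ and $|b_t|$ separately by $K\Delta$. This is exactly the ``cruder route'' you reconstructed, and the constants $2$ and $\sqrt3+1$ come only from that last step; using $|a_t|+c|b_t|\le\sqrt{1+c^2}\,K\Delta$ there would already give your $\sqrt2K\Delta$ and $2K\Delta$. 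Your angle argument treats all roots uniformly, and, more importantly, it is uniform in $k$. What you actually proved is that $\frac{d^k}{dx^k}\ln[(-1)^nP(G,x)]<0$ for every $k\ge2$ and every $x<-K\Delta/\sin(\pi/(2k))$. Since $\sin(\pi/(2k))\ge 1/k$ by Jordan's inequality, this holds whenever $x<-K\Delta k$, i.e.\ for $x<-4.25\Delta k$ with the bound $K\le4.25$ the paper uses. That is stronger than Theorem~\ref{main} ($x\le-6.66\Delta k$), and asymptotically your threshold is about $(2/\pi)K\Delta k$. So, contrary to the paper's remark that the root-based method stops being efficient for larger $k$, your version of it makes the Taylor-series argument of Section~2 unnecessary. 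The paper's explicit formulas buy only elementary concreteness for $k=2,3$.
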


\begin{proof} Suppose that the real roots of $P(G,x)=0$ are $x_1,\ldots,x_i$ and the complex roots are $a_1\pm ib_1,\ldots, a_j\pm ib_j$, then $i+2j=n$, and
$$P(G,x)=(x-x_1)\cdots(x-x_i)(x^2-2a_1x+a_1^2+b_1^2)\cdots(x^2-2a_jx+a_j^2+b_j^2).$$
By computing, we have \[
\frac{d^2}{dx^2} (\ln[(-1)^n P(G, x)])=\sum_{t=1}^i\frac{-1}{(x-x_t)^2}+\sum_{t=1}^j\frac{2b_t^2-2(x-a_t)^2}{\big((x-a_t)^2+b_t^2\big)^2}
\]
and
\[
\frac{d^3}{dx^3} (\ln[(-1)^n P(G, x)])=\sum_{t=1}^i\frac{2}{(x-x_t)^3}+\sum_{t=1}^j\frac{4(x-a_t)\big((x-a_t)^2-3b_t^2\big)}{\big((x-a_t)^2+b_t^2\big)^3}.
\]
It is well known that the $P(G,x)=0$ has no negative real roots. Hence, for $x<0$, $\frac{-1}{(x-x_t)^2}<0$ and $\frac{2}{(x-x_t)^3}<0$.
When $x<-(|a_t|+|b_t|)$, $\frac{2b_t^2-2(x-a_t)^2}{\big((x-a_t)^2+b_t^2\big)^2}<0$; when $x<-(|a_t|+|\sqrt{3}b_t|)$, $\frac{4(x-a_t)\big((x-a_t)^2-3b_t^2\big)}{\big((x-a_t)^2+b_t^2\big)^3}<0$.

From Theorem \ref{1}, $|a_t+ib_t|\leq K\Delta$.  Then \( x <-2K\Delta\) implies $x<-(|a_t|+|b_t|)$,  \( x <-(\sqrt{3}+1)K\Delta\) implies $x<-(|a_t|+|\sqrt{3}b_t|)$. The lemma follows.
\end{proof}

Combining Lemma \ref{le} and $K\leq 4.25$, we get that $(\ln[(-1)^n P(G, x)])'' < 0$ for $x<-8.5\Delta$, and $(\ln[(-1)^n P(G, x)])''' < 0$ for $x<-4.25(\sqrt{3}+1)\Delta$.
When $k$ becomes larger, the method in the above lemma is no longer efficient. In Section 2, we present a different approach to solve this problem.

\section{Main results}
For a graph $G$ of order $n$, let $\alpha_1,\ldots, \alpha_n$ be the roots of $P(G,x)=0$ and \[
c_i=-\dfrac{1}{i}\sum\limits_{j=1}^n \alpha_j^i.
\]
Fadnavis \cite{F15} deduced the exact values of $c_1$ and $c_2$.
\begin{lemma} [\cite{F15}] \label{4} Let $|T|$ be the number of triangles in $G$, then $c_1=-|E|$ and $c_2=-|T|-|E|/2.$
\end{lemma}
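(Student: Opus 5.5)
We prove Lemma~\ref{4} by expressing the first two coefficients of $P(G,x)$ combinatorially and converting them into power sums of the roots with Newton's identities. Write
\[
P(G,x)=x^n-a_{n-1}x^{n-1}+a_{n-2}x^{n-2}-\cdots
\]
using Theorem~\ref{a}, with $a_n=1$. Let $e_1,e_2$ be the elementary symmetric functions of the roots $\alpha_1,\ldots,\alpha_n$ and $p_i=\sum_j\alpha_j^i$ the power sums, so that $c_i=-p_i/i$. Viète's formulas give $e_1=a_{n-1}$ and $e_2=a_{n-2}$. Newton's identities give $p_1=e_1$ and $p_2=e_1^2-2e_2$. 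It therefore suffices to identify $a_{n-1}$ and $a_{n-2}$ combinatorially.

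First, $a_{n-1}$ counts spanning subgraphs with one edge that contain no broken cycle. A single edge is never a broken cycle, since a broken cycle of a simple graph has at least two edges. Hence $a_{n-1}=|E|$, so $p_1=|E|$ and $c_1=-|E|$.

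Next, $a_{n-2}$ counts two-edge subsets that are not broken cycles. A two-edge broken cycle comes from a triangle $C$ with its minimum-label edge removed. Each triangle gives exactly one such pair, and distinct triangles give distinct pairs, because two edges sharing a vertex determine the third edge of the triangle. Longer cycles yield broken cycles with at least three edges, which cannot lie inside a two-edge set. Hence
\[
a_{n-2}=\binom{|E|}{2}-|T|.
\]
Substituting,
\[
p_2=|E|^2-2\binom{|E|}{2}+2|T|=|E|+2|T|,
\]
so $c_2=-p_2/2=-|T|-|E|/2$, as claimed.

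The only step requiring care is the identification of $a_{n-2}$. One must check that the map from triangles to their broken-cycle pairs is injective and exhausts all two-edge broken cycles, which holds because $G$ is simple. As a cross-check, one can instead use the inclusion--exclusion expansion $P(G,x)=\sum_{E'}(-1)^{|E'|}x^{\kappa(E')}$. The coefficient of $x^{n-2}$ collects the two-edge sets, each contributing $+1$, and the triangles, each contributing $-1$ since a triangle has three edges and $\kappa=n-2$. This gives the same value $\binom{|E|}{2}-|T|$.
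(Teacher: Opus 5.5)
Your argument is correct and complete: Whitney's theorem (Theorem~\ref{a}) gives $a_{n-1}=|E|$ and $a_{n-2}=\binom{|E|}{2}-|T|$, and Vieta together with Newton's identities $p_1=e_1$ and $p_2=e_1^2-2e_2$ then yields $c_1=-|E|$ and $c_2=-|T|-|E|/2$. The paper gives no proof of this lemma and only cites \cite{F15}; your derivation is the standard route to these values, so there is no proof in the paper to compare against and nothing missing from yours.
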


Let $K$ be the constant in Theorem \ref{1}. Using Taylor expansion, one can deduce the following result.
\begin{theorem} [\cite{F15}]\label{3} For a graph $G$ of order $n$ and $|x|>K\Delta$, we have
\[
\ln \frac{P(G,x)}{x^n} = \sum_{i=1}^\infty c_i x^{-i}.
\]
\end{theorem}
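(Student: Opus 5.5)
We need to prove Theorem \ref{3}: for $|x|>K\Delta$, $\ln \frac{P(G,x)}{x^n} = \sum_{i\ge1} c_i x^{-i}$ with $c_i=-\frac1i\sum_j\alpha_j^i$.

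The plan is to factor the chromatic polynomial over its roots and expand each logarithmic factor in a geometric-type series. Since $P(G,x)$ is monic of degree $n$ (Theorem \ref{a} gives $a_n(G)=1$), we have
\[
P(G,x)=\prod_{j=1}^n (x-\alpha_j), \qquad\text{hence}\qquad \frac{P(G,x)}{x^n}=\prod_{j=1}^n\Bigl(1-\frac{\alpha_j}{x}\Bigr).
\]
By Theorem \ref{1}, every root satisfies $|\alpha_j|\le K\Delta$. Therefore, for $|x|>K\Delta$, each quotient $w_j=\alpha_j/x$ lies in the open unit disc.

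For $|w|<1$ I use the principal branch of the logarithm, which gives the convergent expansion
\[
\ln(1-w)=-\sum_{i\ge1} \frac{w^i}{i}.
\]
Summing over $j$, which is a finite sum so the series can be interchanged, yields
\[
\sum_{j=1}^n\ln\Bigl(1-\frac{\alpha_j}{x}\Bigr)=\sum_{i\ge1}\Bigl(-\frac1i\sum_{j}\alpha_j^i\Bigr)x^{-i}=\sum_{i\ge1}c_ix^{-i}.
\]
The series converges absolutely, since $|c_i x^{-i}|\le \frac{n}{i}(K\Delta/|x|)^i$.

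The main point requiring care is the meaning of $\ln$ on the left-hand side, because a sum of principal logarithms need not equal the principal logarithm of the product. I would handle this as follows.
\begin{itemize}
\item The function $F(x)=\sum_j \ln(1-\alpha_j/x)$ is holomorphic on the domain $\{|x|>K\Delta\}\cup\{\infty\}$, which is simply connected on the Riemann sphere, and $\exp F = P(G,x)/x^n$.
\item So $F$ is a branch of $\ln(P(G,x)/x^n)$, namely the one normalized by $F(\infty)=0$.
\item For real $x$ with $|x|>K\Delta$, every factor $1-\alpha_j/x$ is real positive or comes in a complex-conjugate pair, so $F(x)$ is real.
\item Hence on the real range this branch is the ordinary real logarithm of the positive number $P(G,x)/x^n$, which is the case used later for negative $x$.
\end{itemize}
With the branch fixed in this way, the identity is exactly the one displayed in Theorem \ref{3}.
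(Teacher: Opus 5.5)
Your proof is correct and is the same Taylor-expansion argument the paper points to (citing Fadnavis without giving details). You factor $P(G,x)/x^n=\prod_j(1-\alpha_j/x)$, use Theorem~\ref{1} to get $|\alpha_j/x|<1$, expand each logarithm and interchange the finite sum over roots; your branch discussion (principal logarithms summed, real for real $x$ because non-real roots come in conjugate pairs) is sound and makes explicit a point the paper leaves implicit.
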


\begin{lemma}\label{se} For $x\leq q$, $q<-K\Delta$, we have
\\(i) $\sum\limits_{i=1}^\infty c_i x^{-i}$ converges uniformly.
\\(ii) $\sum\limits_{i=1}^\infty (c_i x^{-i})^{(k)}, ~~k=1,2,\ldots$, converges uniformly.
\\(iii) Both $\sum\limits_{\substack{ i =1\\ \text{$i$ is odd}}}^\infty \big((\frac{K\Delta}{x})^{i}\big)^{(k)}$ and $\sum\limits_{\substack{ i =1\\ \text{$i$ is even}}}^\infty \big((\frac{K\Delta}{x})^{i}\big)^{(k)}$,~~$k=1,2,\ldots$,  converge uniformly.
\end{lemma}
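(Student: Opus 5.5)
Every part of the lemma will follow from the Weierstrass M-test once $|c_i|$ has a geometric bound. By Theorem \ref{1}, every root $\alpha_j$ of $P(G,x)$ satisfies $|\alpha_j|\le K\Delta$. Hence
\[
|c_i|\le \frac{1}{i}\sum_{j=1}^n|\alpha_j|^i\le \frac{n}{i}(K\Delta)^i .
\]
Put $r=K\Delta/|q|$. Since $q<-K\Delta$, we have $0\le r<1$, and for every $x\le q$ we have $|x|\ge |q|$, so $|K\Delta/x|\le r$.

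For (i), the bound above gives, for $x\le q$,
\[
|c_ix^{-i}|\le \frac{n}{i}\Big(\frac{K\Delta}{|x|}\Big)^i\le \frac{n}{i}r^i .
\]
The series $\sum_i \frac{n}{i}r^i$ converges because $r<1$. The M-test then gives uniform convergence on $(-\infty,q]$.

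For (ii), I differentiate term by term. The $k$-th derivative of $x^{-i}$ is
\[
(x^{-i})^{(k)}=(-1)^k\, i(i+1)\cdots(i+k-1)\,x^{-i-k}.
\]
Therefore, for $x\le q$,
\[
|(c_ix^{-i})^{(k)}|\le \frac{n}{i}(K\Delta)^i\, i(i+1)\cdots(i+k-1)\,|x|^{-i-k}\le n\,|q|^{-k}\,(i+k)^{k}\, r^i .
\]
Here I used $i(i+1)\cdots(i+k-1)/i\le (i+k)^{k-1}\le (i+k)^k$ together with $|x|^{-i-k}\le |q|^{-i-k}$. The majorant $\sum_i (i+k)^k r^i$ converges for $r<1$, for example by the ratio test, since $((i+1+k)/(i+k))^k r\to r<1$. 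The M-test again gives uniform convergence, and this holds for each fixed $k$.

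For (iii), the same computation with $c_i$ replaced by $(K\Delta)^i$ gives, for $x\le q$,
\[
\Big|\Big(\Big(\frac{K\Delta}{x}\Big)^i\Big)^{(k)}\Big|\le (K\Delta)^i\, i(i+1)\cdots(i+k-1)\,|q|^{-i-k}\le |q|^{-k}(i+k)^k r^i .
\]
This majorant is summable over all $i$, so it is summable over the odd indices alone and over the even indices alone. Uniform convergence of both subseries follows.

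No step is a genuine obstacle. The only points that need care are the bound $|c_i|\le \frac{n}{i}(K\Delta)^i$, which comes from Theorem \ref{1}, and the observation that $x\le q$ gives a single ratio $r<1$ that works uniformly in $x$. The polynomial factor $(i+k)^k$ produced by differentiation is absorbed by the geometric factor $r^i$.
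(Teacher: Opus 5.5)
Your proof is correct and follows the paper's route: you bound $|c_i|\le \frac{n}{i}(K\Delta)^i$ using Theorem~\ref{1}, then apply the Weierstrass M-test on $(-\infty,q]$, where the fixed ratio $K\Delta/|q|<1$ absorbs the polynomial factor that differentiation produces. The only differences are cosmetic: you majorize the product $(i+1)\cdots(i+k-1)$ crudely by $(i+k)^k$ and write out parts (i) and (iii) explicitly, whereas the paper keeps the exact product and proves only (ii).
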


\begin{proof} The proofs of the three results are similar, so we only present the proof of conclusion (ii).
From the definition of $c_i$ and Theorem \ref{1}, we have
\(
|c_i| \leq \frac{1}{i} \sum\limits_{j=1}^n |\alpha_j|^i \leq \frac{n}{i}(K\Delta)^i
\).
Then, for $x\leq q$, $q<-K\Delta$, we have \begin{equation*}\label{e1}|(c_i x^{-i})^{(k)}|=|c_i \dfrac{(-1)^k i(i+1)\cdots(i+k-1)}{x^{i+k}}|\leq \dfrac{n}{i} (K\Delta)^i\dfrac{i(i+1)\cdots(i+k-1)}{(-q)^{i+k}}.\end{equation*}
And the positive term series $\sum\limits_{ i =1}^\infty \dfrac{(i+1)\cdots(i+k-1)}{(-q)^{k}}(\dfrac{K\Delta}{-q})^{i}$ converges.
From Weierstrass M-test, $\sum\limits_{i=1}^\infty (c_i x^{-i})^{(k)}$, $x\leq q$, $q<-K\Delta$ converges uniformly.
\end{proof}

From Lemma \ref{se} and properties of uniform convergence series, we have
\begin{equation}\label{e2}(\sum\limits_{i=1}^\infty c_i x^{-i})^{(k)}=(\sum\limits_{\substack{ i =1\\ \text{$i$ is odd}}}^\infty c_i x^{-i})^{(k)}+(\sum\limits_{\substack{ i =1\\ \text{$i$ is even}}}^\infty c_i x^{-i})^{(k)},\end{equation}
for $k\geq 0$ ($k=0$ refers to the series itself), and the term-by-term  differentiation is valid.

Because $K\leq 4.25$, \(|c_i| \leq \frac{n}{i}(K\Delta)^i\leq \frac{n}{i}(4.25\Delta)^i\). And $$(c_i x^{-i})^{(k)}=c_i \frac{(-1)^k i(i+1)\cdots(i+k-1)}{x^{i+k}}.$$
For $x\leq q$, $q<-4.25\Delta$ and $k\geq 2$, when $i$ is odd, $\frac{(-1)^k}{x^{i+k}}<0$,
$$(c_i x^{-i})^{(k)}\leq \dfrac{-n(4.25\Delta)^i(-1)^{k}(i+1)\cdots(i+k-1)}{x^{i+k}}=n(4.25\Delta)^{i}(\frac{1}{x^{i+1}})^{(k-1)};$$
when $i$ is even, $\frac{(-1)^k}{x^{i+k}}>0$,
$$(c_i x^{-i})^{(k)}\leq \dfrac{n(4.25\Delta)^i(-1)^{k}(i+1)\cdots(i+k-1)}{x^{i+k}}=-n(4.25\Delta)^{i}(\frac{1}{x^{i+1}})^{(k-1)}.$$
Hence
\begin{equation}\label{e3}(\sum\limits_{\substack{ i =1\\ \text{$i$ is odd}}}^\infty c_i x^{-i})^{(k)}\leq \frac{n}{4.25\Delta}(\sum\limits_{\substack{ i =1\\ \text{$i$ is odd}}}^\infty (\frac{4.25\Delta}{x})^{i+1})^{(k-1)}\end{equation}
and \begin{equation}\label{e4}(\sum\limits_{\substack{ i =1\\ \text{$i$ is even}}}^\infty c_i x^{-i})^{(k)}\leq -\frac{n}{4.25\Delta}(\sum\limits_{\substack{ i =1\\ \text{$i$ is even}}}^\infty (\frac{4.25\Delta}{x})^{i+1})^{(k-1)}\end{equation}

\begin{lemma}\label{2} For a graph $G$ of order $n$, $k\geq 1$ and $x<0$,
\[(\ln[(-1)^n P(G, x)])^{(k)}=(\ln\frac{P(G,x)}{x^n})^{(k)}+\frac{(k-1)!(-1)^{k-1}n}{x^{k}}. \]
\end{lemma}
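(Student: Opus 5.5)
For $x<0$ I will write $(-1)^nP(G,x)$ as a product of two positive factors and take logarithms. Since $x<0$, we have $(-1)^n x^n=(-x)^n>0$. It is well known (and was used in the proof of Lemma \ref{le}) that $P(G,x)$ has no negative real roots and that $(-1)^nP(G,x)>0$ for $x<0$. Hence $P(G,x)/x^n=(-1)^nP(G,x)/(-x)^n>0$ on $(-\infty,0)$, and $\ln\frac{P(G,x)}{x^n}$ is a well-defined smooth real function there. Consequently
\[
\ln[(-1)^nP(G,x)]=\ln\frac{P(G,x)}{x^n}+n\ln(-x),\qquad x<0,
\]
since the right-hand side equals $\ln\bigl(\frac{P(G,x)}{x^n}\cdot(-x)^n\bigr)$ and $\frac{P(G,x)}{x^n}(-x)^n=(-1)^nP(G,x)$.

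Next I differentiate this identity $k$ times, using only linearity of differentiation; both summands are smooth on $(-\infty,0)$, so this is legitimate. It remains to compute $\bigl(n\ln(-x)\bigr)^{(k)}$. The first derivative is $\frac{d}{dx}\ln(-x)=\frac{1}{x}$, and by induction $\bigl(x^{-1}\bigr)^{(m)}=(-1)^m m!\,x^{-m-1}$. Taking $m=k-1$ gives
\[
\bigl(n\ln(-x)\bigr)^{(k)}=n\,(-1)^{k-1}(k-1)!\,x^{-k},
\]
which is exactly the claimed correction term $\frac{(k-1)!(-1)^{k-1}n}{x^k}$.

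The only point needing care is the positivity of $P(G,x)/x^n$ on $(-\infty,0)$, which makes the logarithm real, and this follows from the sign pattern of the coefficients in Theorem \ref{a}. In $(-1)^nP(G,x)=\sum_i(-1)^{i}a_i(G)x^i$, each term $(-1)^ia_i(G)x^i=a_i(G)(-x)^i$ is nonnegative for $x<0$. Moreover $a_n(G)=1$, so the sum is strictly positive. No part of the argument relies on the series expansion of Theorem \ref{3}, so the lemma holds for all $x<0$, not only for $|x|>K\Delta$.
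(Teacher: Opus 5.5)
Your proof is correct and matches the paper's: for $x<0$ you split $\ln[(-1)^nP(G,x)]=\ln\frac{P(G,x)}{x^n}+n\ln(-x)$, then differentiate $k$ times using $\bigl(n\ln(-x)\bigr)^{(k)}=\frac{(k-1)!(-1)^{k-1}n}{x^k}$. You also check explicitly, via the sign pattern of Whitney's coefficients and $a_n(G)=1$, that $P(G,x)/x^n>0$ on $(-\infty,0)$; the paper leaves this positivity implicit.
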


\begin{proof} If $x<0$, then $\ln\frac{P(G,x)}{x^n}=\ln(-1)^nP(G,x)-\ln(-1)^nx^n$.
Because $(\ln(-1)^nx^n)^{(k)}=\frac{(k-1)!(-1)^{k-1}n}{x^{k}}$, the lemma follows.
\end{proof}

\begin{theorem} \label{main}For a graph $G$ of order $n$ and $k\geq 2$,
\[(\ln[(-1)^n P(G, x)])^{(k)}<0, \] holds for all $x\leq -6.66\Delta k$.\end{theorem}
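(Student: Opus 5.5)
The plan is to combine Lemma~\ref{2} with the Taylor expansion of Theorem~\ref{3}. For $x<0$ the exact term in Lemma~\ref{2} equals $(k-1)!(-1)^{k-1}n/x^k=-(k-1)!\,n/|x|^k$. This is the negative main term, and the task is to show that $(\sum_{i\ge1}c_ix^{-i})^{(k)}$ is strictly smaller in absolute value when $|x|\ge 6.66\Delta k$. By Lemma~\ref{se} we may differentiate term by term, and
\[
(c_ix^{-i})^{(k)}=c_i\,\frac{(-1)^k\, i(i+1)\cdots(i+k-1)}{x^{i+k}} .
\]
Put $a=4.25\Delta$ and $r=a/|x|$. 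Using only $|c_i|\le \frac{n}{i}a^i$ for every $i$ gives the tail bound
\[
\frac{n(k-1)!}{|x|^k}\sum_{i\ge1}\binom{i+k-1}{k-1}r^i=\frac{n(k-1)!}{|x|^k}\bigl((1-r)^{-k}-1\bigr).
\]
Equivalently, we can use the odd/even split (\ref{e3}), (\ref{e4}). Summing the geometric series turns the bracket into $a\bigl(1/x-1/(x+a)\bigr)$, and one gets the same bound $n(k-1)!\bigl[-2/|x|^k+1/(|x|-a)^k\bigr]$. Either way, negativity holds as soon as $(1-r)^{-k}<2$, that is, $r<1-2^{-1/k}$.

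A quick check shows this crude criterion is not quite enough to reach $6.66\Delta k$ for small $k$. For example, at $k=4$ we have $1/(1-2^{-1/4})\approx 6.285$, while $6.66\cdot4/4.25\approx 6.268$. So I would refine the first two terms using Lemma~\ref{4}.

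For $i=2$: since $c_2=-|T|-|E|/2<0$ and $x^{k+2}$ has sign $(-1)^k$, the term $(c_2x^{-2})^{(k)}=c_2(k+1)!/|x|^{k+2}$ is negative, so it can simply be dropped.

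For $i=1$: the term equals $|E|\,k!/|x|^{k+1}>0$. Since $|E|\le n\Delta/2$, it is bounded by $\frac{n\Delta}{2}k!/|x|^{k+1}$, which is much better than the generic bound $n a\,k!/|x|^{k+1}$.

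The required inequality then becomes
\[
\frac{\Delta k}{2|x|}+\sum_{i\ge3}\binom{i+k-1}{k-1}r^i<1 .
\]
Here the sum equals $(1-r)^{-k}-1-kr-\binom{k+1}{2}r^2$.

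The main obstacle is verifying this numerical inequality uniformly in $k$ at $|x|=6.66\Delta k$, so that $r=4.25/(6.66k)\approx 0.638/k$. Monotonicity in $|x|$ is clear, so only the boundary value needs checking. The first term is $1/13.32$. For the sum, I would bound
\[
(1-r)^{-k}\le \exp\!\Bigl(\frac{kr}{1-r}\Bigr)\le \exp\!\Bigl(\frac{0.638}{1-0.319}\Bigr)
\]
and subtract the first three terms of the binomial series. Using $\binom{k+1}{2}r^2\ge \frac{k^2r^2}{2}$ and $kr\approx0.638$, the quantity becomes essentially $e^{t}-1-t-t^2/2$ with $t\lesssim 0.94$, up to a correction of order $1/k$. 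This is roughly $0.15$ to $0.2$, well under $1-1/13.32$.

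The small cases $k=2,3$ are also covered by Lemma~\ref{le}, since $-2K\Delta\ge-8.5\Delta>-13.32\Delta$ and $-(\sqrt3+1)K\Delta>-19.98\Delta$. I would still check a few small $k$ explicitly, because there the $1/k$ corrections in the exponential estimate are largest. This should yield strict negativity for all $k\ge2$ and $x\le-6.66\Delta k$.
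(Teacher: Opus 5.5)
Your proposal is correct and is essentially the paper's proof. It uses the same decomposition: the Lemma~\ref{2} main term, the $c_1$ term bounded by $|E|\le n\Delta/2$, the sign-controlled $c_2$ term, and the $i\ge 3$ tail bounded via $|c_i|\le \frac{n}{i}(4.25\Delta)^i$ and summed to $(1-r)^{-k}-1-kr-\binom{k+1}{2}r^2$. It differs only in the final numerical check: the paper uses Bernoulli's inequality and a monotone cubic $f(x)$, while you use $(1-r)^{-k}\le e^{kr/(1-r)}$ and monotonicity in $|x|$.

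One slip in that check. The subtracted terms are $1+kr+\binom{k+1}{2}r^2\ge 1.842$ with $kr\approx 0.638$, not $1+t+t^2/2$ with $t\approx 0.94$. Your estimate therefore gives about $e^{0.937}-1.842\approx 0.71$ rather than $0.15$--$0.2$. That is still below $1-1/13.32\approx 0.925$ for every $k\ge 2$, so the conclusion holds uniformly and no small-$k$ case checks are needed.
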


\begin{proof} If $n=1$, then $P(G,x)=x$, $(\ln(-x))^{(k)}=\frac{(-1)^{k-1}(k-1)!}{x^k}<0$ holds for all $x<0$. We now assume $n\geq 2$.

From Theorem \ref{3} and Lemma \ref{2},
\begin{eqnarray}\label{e0}(\ln[(-1)^n P(G, x)])^{(k)}=(\sum\limits_{i=1}^\infty c_i x^{-i})^{(k)}+\frac{(k-1)!(-1)^{k-1}n}{x^{k}}.\end{eqnarray}
Now we consider $(\sum\limits_{i=1}^\infty c_i x^{-i})^{(k)}$. From \eqref{e2},
$$(\sum\limits_{i=1}^\infty c_i x^{-i})^{(k)}=(\frac{c_1}{x})^{(k)}+(\frac{c_2}{x^2})^{(k)}+(\sum\limits_{\substack{ i =3\\ \text{$i$ is odd}}}^\infty c_i x^{-i})^{(k)}+(\sum\limits_{\substack{ i =4\\ \text{$i$ is even}}}^\infty c_i x^{-i})^{(k)}.$$
From Lemma \ref{4}, $c_1=-|E|$ and $c_2=-|T|-|E|/2.$ By computing, \begin{eqnarray*}(\frac{c_1}{x})^{(k)}+(\frac{c_2}{x^2})^{(k)}&=&-|E| \frac{(-1)^{k} \cdot k!}{x^{k+1}} - (|T|+\frac{|E|}{2}) \frac{(-1)^{k} \cdot (k+1)!}{x^{k+2}}\end{eqnarray*}
Because $G$ is a connected graph with order $n\geq 2$, and the maximum degree is $\Delta$,  we have $\frac{n}{2}\leq |E|\leq\frac{\Delta n}{2}$. Then
\begin{eqnarray}\label{e5}(\frac{c_1}{x})^{(k)}+(\frac{c_2}{x^2})^{(k)}&\leq &-\frac{\Delta n}{2} \frac{(-1)^{k}k!}{x^{k+1}} - \frac{n}{4} \frac{(-1)^{k} (k+1)!}{x^{k+2}}.\end{eqnarray}
By \eqref{e3}\eqref{e4}, some computations and simplifications, we obtain
\begin{eqnarray}\label{e6}
&&(\sum\limits_{\substack{ i =3\\ \text{$i$ is odd}}}^\infty c_i x^{-i})^{(k)}+(\sum\limits_{\substack{ i =3\\ \text{$i$ is even}}}^\infty c_i x^{-i})^{(k)}\nonumber\\
&\leq& \frac{n}{4.25\Delta}(\sum_{\substack{i=3 \\ i \text{ is odd}}}^{\infty}(\frac{4.25\Delta}{x})^{i+1})^{(k-1)}-\frac{n}{4.25\Delta}(\sum_{\substack{i=4 \\ i \text{ is even}}}^{\infty}(\frac{4.25\Delta}{x})^{i+1})^{(k-1)}\nonumber\\
&=& \frac{n}{4.25\Delta}(\frac{(4.25\Delta)^4}{x^3(x+4.25\Delta)})^{(k-1)}=n(\frac{1}{x}-\frac{4.25\Delta}{x^2}+\frac{(4.25\Delta)^2}{x^3}-\frac{1}{x+4.25\Delta})^{(k-1)}\nonumber\\
&=&n(-1)^{k-1}\Big(\frac{(k-1)!}{x^{k}}-4.25\Delta \frac{k!}{x^{k+1}} + (4.25\Delta)^{2}\frac{(k+1)!}{2x^{k+2}}-\frac{(k-1)!}{(x+4.25\Delta)^{k}}\Big)
\end{eqnarray}

Combining \eqref{e0}, \eqref{e5} and \eqref{e6}, we have
\begin{eqnarray*}&&(\ln[(-1)^n P(G, x)])^{(k)}\\
&\leq&-\frac{\Delta n}{2} \frac{(-1)^{k}k!}{x^{k+1}} - \frac{n}{4} \frac{(-1)^{k} (k+1)!}{x^{k+2}} \\
&&+n(-1)^{k-1}\Big(\frac{2(k-1)!}{x^{k}}-4.25\Delta \frac{k!}{x^{k+1}} + (4.25\Delta)^{2}\frac{(k+1)!}{2x^{k+2}}-\frac{(k-1)!}{(x+4.25\Delta)^{k}}\Big)\\
&=&n(-1)^{k-1}(k-1)!\frac{(x+4.25\Delta)^{k}\big(8x^2-15x\Delta k+k(k+1)+2(4.25\Delta)^2k(k+1)\big)-4x^{k+2}}{4x^{k+2}(x+4.25\Delta)^{k}}
\end{eqnarray*}

To prove $(\ln[(-1)^n P(G, x)])^{(k)}<0$ for $x\leq -6.66\Delta k$, it is enough to verify the following inequality for $x\leq -6.66\Delta k$,
\begin{eqnarray}\label{e7}(-1)^{k-1}(x+4.25\Delta)^{k}\big(8x^2-15x\Delta k+k(k+1)+2(4.25\Delta)^2k(k+1)\big)<(-1)^{k-1}4x^{k+2}.\end{eqnarray}
We denote $f_1(x)=8x^2-15x\Delta k+k(k+1)+2(4.25\Delta)^2k(k+1)$. Because $f_1(x)>0$ and $(-1)^{k-1}x^{k}<0$,
inequality \eqref{e7} is equivalent to
$$\frac{(-1)^{k-1}(x+4.25\Delta)^{k}}{(-1)^{k-1}x^{k}}>\frac{4x^2}{f_1(x)},$$ that is
\begin{eqnarray*}\label{e8}(1+\frac{4.25\Delta}{x})^{k}>\frac{4x^2}{f_1(x)}.\end{eqnarray*}
By using Bernoulli's Inequality, we have $(1+\frac{4.25\Delta}{x})^{k}\geq 1+\frac{4.25\Delta k}{x},$  when $x<-4.25\Delta k$.
We let
$$1+\frac{4.25\Delta k}{x}> \frac{4x^2}{f_1(x)}$$ which is equivalent to $$xf_1(x)+4.25k\Delta f_1(x)-4x^3< 0.$$
Let $f(x)=xf_1(x)+4.25k\Delta f_1(x)-4x^3.$
We can check that when $x<-4.25\Delta k$, $f'(x)>0$, $f(x)$ is increasing strictly. By computing, we have $f(-6.66\Delta k)<0$.
Hence when $x\leq -6.66\Delta k$, $f(x)<0$, $(\ln[(-1)^n P(G, x)])^{(k)}<0$ follows.  The proof is complete.
\end{proof}

We note that to obtain inequalities \eqref{e3} and \eqref{e4} required for the proof of Theorem \ref{main}, we need to estimate the value of $c_i$.
In this note, we estimate $c_i$ by using $|c_i| \leq \frac{n}{i}(K\Delta)^i$. Thus, a new bound on $K$ gives a new bound on $c_i$, and a smaller
$K$ yields a better bound.


\section*{Acknowledgements}

The author thanks Guus Regts for informing her of reference \cite{BR26}, which provides the best bound on $K$ to date and is very helpful for the present note.
The author is supported by National Natural Science Foundation of China (No. 12371350).


\begin{thebibliography}{99}
\frenchspacing
\bibitem{BN20} O. Bernardi and P. Nadeau, Combinatorial reciprocity for the chromatic polynomial and the chromatic
symmetric function, \emph{Discrete Math.}, \textbf{343} (2020), 111989.

\bibitem{B1912} G. D. Birkhoff, A determinant formula for the number of ways of coloring a map, \emph{Ann. of Math.}, {\bf14}
(1912) 42--46.

\bibitem{BR26} F. Bencs and G. Regts, Improved bounds on the zeros of the chromatic polynomial of graphs and claw-free graphs,
https://arxiv.org/abs/2505.04366, (2026).


\bibitem{D21}  F. M. Dong,  J. Ge,  H. L. Gong,  B. Ning,  Z. Ouyang,  EG. Tay,  Proving a
conjecture on chromatic polynomials by counting the number of acyclic orientations,
\emph{J. Graph Theory}, \textbf{96} (2021), 343--360.

\bibitem{DKT05} F. M. Dong, K. M. Koh, and K. L. Teo, Chromatic Polynomials and Chromaticity of Graphs, World
Scientific, Singapore, 2005.

\bibitem{F15} S. Fadnavis, A note on the shameful conjecture, \emph{European J. Combin.}, \textbf{47} (2015), 115--122.

\bibitem{FP08} R. Fern\'{a}ndez and A. Procacci, Regions without complex zeros for chromatic polynomials
on graphs with bounded degree, \emph{Combin. Probab. Comput.}, \textbf{17(2)} (2008), 225--238.


\bibitem{GZ83}C. Greene and T. Zaslavsky, On the interpretation of Whitney numbers through arrangements of hyperplanes,
zonotopes, non-Radon partitions, and orientations of graphs, \emph{Trans. Amer. Math. Soc.}, \textbf{280} (1983), 97--124.25.

\bibitem{J24}M. Jenssen, V. Patel, G. Regts, Improved bounds for the zeros of the chromatic polynomial via Whitney's Broken Circuit Theorem, \emph{J. Combin. Theory, Ser. B}, \textbf{169} (2024), 233--252.

\bibitem{S1973}R. P. Stanley, Acyclic orientations of graphs, \emph{Discrete Math.}, \textbf{5} (1973), 171--178.

\bibitem{S01}A. D. Sokal, Bounds on the complex zeros of (di)chromatic polynomials and Potts-model partition
functions, \emph{Combin. Probab. Comput.}, \textbf{10(1)} (2001), 41--77.


\bibitem{W1932} H. Whitney, A logical expansion in mathematics, \emph{Bull. Amer. Math. Soc.}, \textbf{38} (1932), 572--579.


\end{thebibliography}
\end{document}